\newtheorem*{theorem*}{Theorem}
\newtheorem{theorem}{Theorem}[section]
\newtheorem{lemma}[theorem]{Lemma}
\newtheorem{corollary}[theorem]{Corollary}
\newtheorem{observation}[theorem]{Observation}
\newcommand{\labeltarget}[1]{\Hy@raisedlink{\hypertarget{#1}{}}}
\setlist[enumerate]{nosep,topsep=0.1em}
\setlist[enumerate,1]{label=(\roman*), leftmargin=2.2em}
\setlist[itemize]{nosep,topsep=0.3em}
\newcommand\appendtographicspath[1]{%
  \g@addto@macro\Ginput@path{#1}%
}
\definecolor{darkblue}{rgb}{0,0,0.38}
\definecolor{darkred}{rgb}{0.8,0,0}
\definecolor{darkgreen}{rgb}{0.1,0.35,0}
\DeclareMathOperator{\supp}{supp}
\DeclareMathOperator{\rank}{rank}
\renewcommand{\epsilon}{\varepsilon}
\renewcommand{\M}{\mathcal{M}}
\newcommand{\I}{\mathcal{I}}
\renewcommand{\E}{\mathbb{E}}
\renewcommand{\R}{\mathbb{R}}
\newcommand{\MOFS}{{\ensuremath{\mathrm{MOFS}}}\xspace}
\newcommand{\MSP}{{\ensuremath{\mathrm{MSP}}}\xspace}
\newcommand{\FMSP}{{\ensuremath{\mathrm{FMSP}}}\xspace}
\let\@@pmod\pmod
\DeclareRobustCommand{\pmod}{\@ifstar\@pmods\@@pmod}
\def\@pmods#1{\mkern8mu({\operator@font mod}\mkern 6mu#1)}
\let\@@mod\mod
\DeclareRobustCommand{\mod}{\@ifstar\@mods\@@mod}
\def\@mods#1{\mkern8mu{\operator@font mod}\mkern 6mu#1}
\def\@fnsymbol#1{\ensuremath{\ifcase#1\or *\or 
\ddagger\or
    \mathsection\or \mathparagraph\or \|\or **\or \dagger\dagger
    \or \ddagger\ddagger \else\@ctrerr\fi}}
\title{Simple Random Order Contention Resolution for Graphic Matroids with Almost no Prior Information%
\thanks{This project received funding from Swiss National Science Foundation grant 200021\_184622 and the European Research Council (ERC) under the European Union's Horizon 2020 research and innovation programme (grant agreement No 817750).}
}
\author{
Richard Santiago\thanks{
Department of Mathematics, ETH Zurich, Zurich, Switzerland.
Email: \href{mailto:rtorres@ethz.ch}%
{rtorres@ethz.ch}.}
\and
Ivan Sergeev\thanks{
Department of Mathematics, ETH Zurich, Zurich, Switzerland.
Email: \href{mailto:isergeev@ethz.ch}%
{isergeev@ethz.ch}.}
\and
Rico Zenklusen\thanks{
Department of Mathematics, ETH Zurich, Zurich, Switzerland.
Email: \href{mailto:ricoz@ethz.ch}%
{ricoz@ethz.ch}.}
}
\date{}
\begin{document}

\maketitle

\begin{abstract}
Random order online contention resolution schemes (ROCRS) are structured online rounding algorithms with numerous applications and links to other well-known online selection problems, like the matroid secretary conjecture. We are interested in ROCRS subject to a matroid constraint, which is among the most studied constraint families. Previous ROCRS required to know upfront the full fractional point to be rounded as well as the matroid. It is unclear to what extent this is necessary. \citeauthor{fu2022oblivious} (SOSA 2022) shed some light on this question by proving that no strong (constant-selectable) online or even offline contention resolution scheme exists if the fractional point is unknown, not even for graphic matroids.

In contrast, we show, in a setting with slightly more knowledge and where the fractional point reveals one by one, that there is hope to obtain strong ROCRS by providing a simple constant-selectable ROCRS for graphic matroids that only requires to know the size of the ground set in advance. Moreover, our procedure holds in the more general adversarial order with a sample setting, where, after sampling a random constant fraction of the elements, all remaining (non-sampled) elements may come in adversarial order.
\end{abstract}

\begin{tikzpicture}[overlay, remember picture, shift = {(current page.south east)}]
\begin{scope}[shift={(-1.1,1.5)}]
\def\hd{2.5}
\node at (-2.15*\hd,0) {\includegraphics[height=0.7cm]{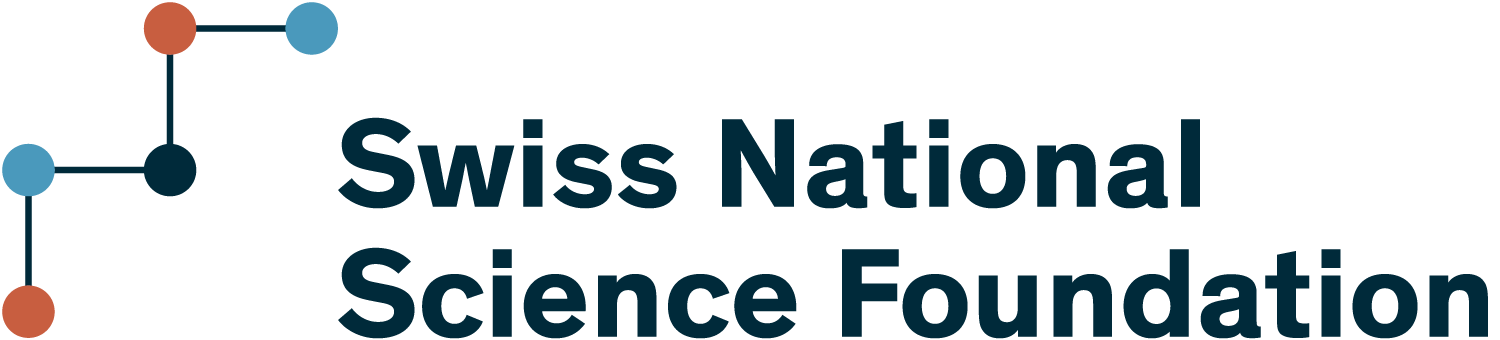}};
\node at (-\hd,0) {\includegraphics[height=1.0cm]{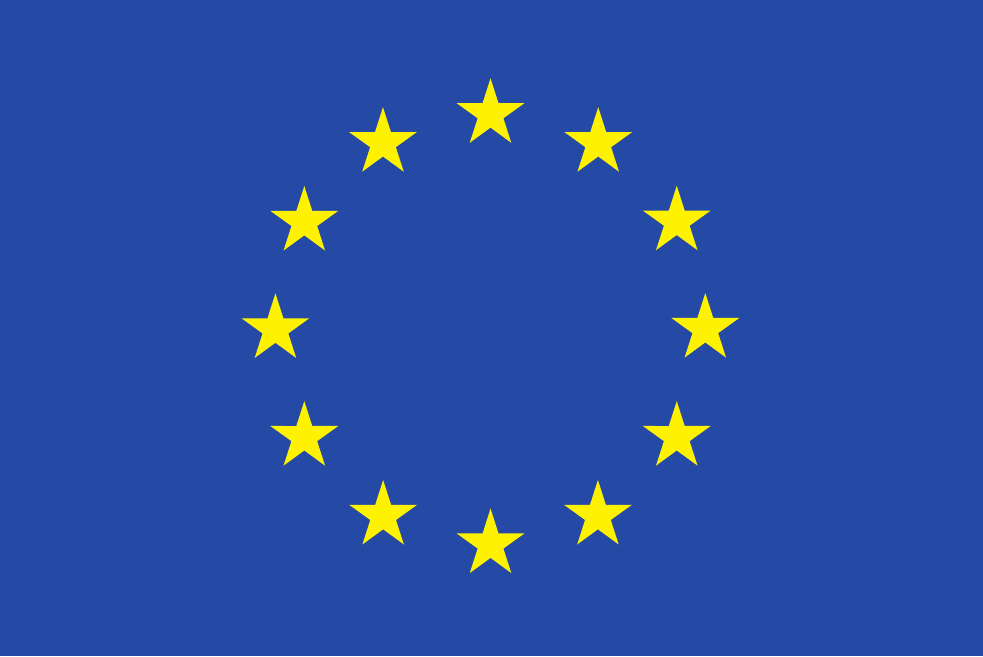}};
\node at (-0.2*\hd,0) {\includegraphics[height=1.2cm]{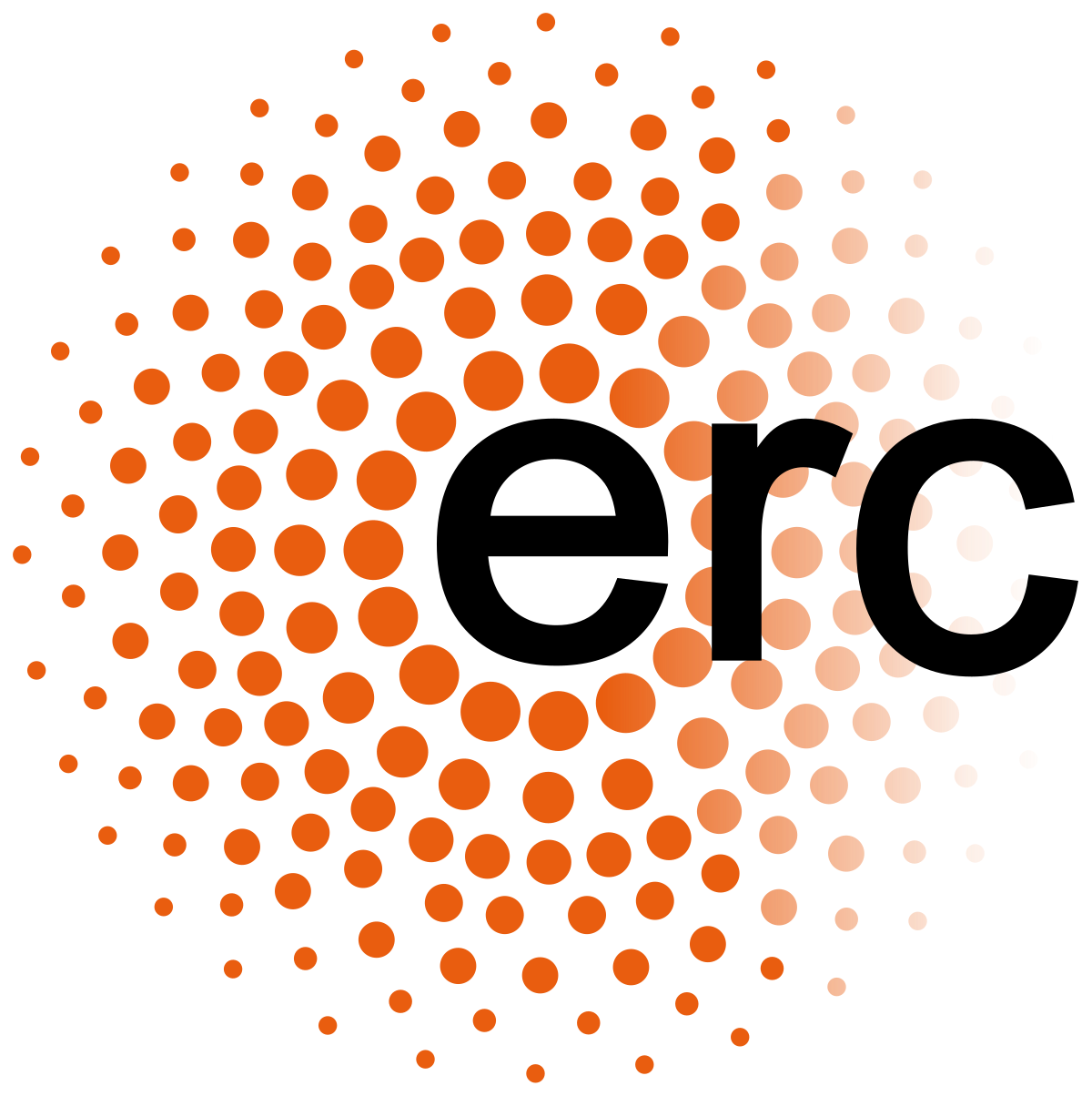}};
\end{scope}
\end{tikzpicture}

\section{Introduction}

Contention resolution schemes (CRS) are a class of (typically randomized) rounding algorithms to transform a fractional point from a polyhedral relaxation into an integral one while preserving feasibility. They were introduced in \cite{chekuri2014submodular} motivated by applications to submodular maximization problems, and have proven to be a versatile tool. Contention resolution schemes were subsequently extended to the online setting~\cite{feldman2016online}, where elements arrive one by one in an online fashion. (See also~\cite{adamczyk2018random}.) They have found applications in areas such as stochastic probing, posted pricing mechanisms, and prophet inequalities. (For further information on these areas, we refer the interested reader to the following recent works and references therein: \cite{adamczyk2016, gupta2013, gupta2016, fu2021} for stochastic probing, \cite{chawla2010, yan2011, kleinberg2012} for posted pricing mechanisms, and \cite{ezra2020online, kleinberg2012, kleinberg2019} for prophet inequalities.)

Formally, we are given a downwards-closed family $\I \subseteq 2^N$, a polyhedral relaxation $P_{\I}$ thereof,%
\footnote{We call a polyhedron $P_{\I} \subseteq [0,1]^N$ a \emph{polyhedral relaxation} of $\I$ if $P_{\I}\cap \{0,1\}^N = \{\mathbf{1}_S \colon S \in \I\}$, where $\mathbf{1}_S\in \{0,1\}^N$ is the characteristic vector of $S$. Hence, the $0/1$-points in $P_{\I}$ correspond to $\I$.}
and a vector $x \in P_{\I}$. Elements of $N$ arrive one by one. Each element $e\in N$ is either active, with probability $x_e$, or inactive, with probability $1-x_e$, independently of the other elements. Whether an element is active is revealed upon its arrival. Moreover, whenever an arriving element $e\in N$ is active, the algorithm must decide irrevocably whether to pick it or not. At all times, the set of currently selected elements $T$ must satisfy $T \in \I$. The goal is to select each element $e$ with probability at least $c \cdot x_e$ for $c\in [0,1]$ as large as possible. The scheme is then called \emph{$c$-selectable}.%
\footnote{We note that the term \emph{$c$-balanced} has also been used in the literature instead of \emph{$c$-selectable}.}
Different models have been studied depending on the arrival order of the elements. Random order contention resolution schemes (ROCRS) \cite{adamczyk2018random, lee2018optimal} assume that elements are revealed in uniformly random order, while online contention resolution schemes (OCRS) \cite{alaei2014bayesian, feldman2016online, lee2018optimal} assume the order is fixed by an adversary, typically upfront.%
\footnote{Different types of adversaries have been considered in the literature, such as offline, online, and almighty adversaries.}

While offline and online CRS have been studied under several types of constraints, in this work we restrict our attention to matroid constraints, which is among the most studied constraint classes in this context.%
\footnote{A matroid $\mathcal{M}$ is a pair $\mathcal{M}=(E,\I)$ where $E$ is a finite set and $\I \subseteq 2^E$ is a non-empty family satisfying: 1) if $A \subseteq B$ and $B \in \I$ then $A \in \I$, and 2) if $A,B \in \I$ and $|B|>|A|$ then $\exists e \in B \setminus A$ such that $A \cup \{e\} \in \I$.}
For matroids it is known that contention resolution schemes with constant-factor guarantees (i.e., $c=\Omega(1)$) exist. They have been developed for the offline setting \cite{chekuri2014submodular}, the random order setting \cite{adamczyk2018random, lee2018optimal}, and the online (adversarial) setting \cite{feldman2016online, lee2018optimal}.

The above models, however, usually assume prior knowledge of both the matroid $\M$ and the point $x$ to be rounded, where $x$ is required to be in the matroid polytope $P_{\M}$.%
\footnote{The matroid polytope $P_{\M}\subseteq [0,1]^E$ of matroid $\M=(E,\I)$ is the convex hull of all characteristic vectors of independent sets.}
This prior knowledge is exploited heavily when building and analyzing the respective schemes. A key question is how much prior knowledge is required to obtain strong guarantees (i.e., constant selectability). This question is motivated by related problems in the area where existing procedures seem to need too much information to have a chance to be useful for progress on further known questions. (In \cref{sec:connectionMSP} we further expand on this using a connection to the matroid secretary problem.) In terms of prior knowledge, the following two questions capture central aspects:
\begin{itemize}
    \item How much information about the point $x$ to be rounded is known in advance?
    \item How much of the matroid $\mathcal{M}$ is known in advance?
\end{itemize}
In terms of knowledge of $x$, the arguably strongest notion of (distribution) obliviousness is that the CRS does not depend on $x$ whatsoever (and hence, also does not learn $x$ over time). Such a CRS is said to be \emph{oblivious}. Hence, whenever an element arrives, the algorithm only learns whether that element is active. In \cite{chekuri2014submodular}, an oblivious $\Omega(1)$-selectable CRS for laminar matroids was presented. This raised the question to what extent such schemes may exist more generally. Very recently, \cite{fu2022oblivious} showed that, unfortunately, oblivious $\Omega(1)$-selectable CRS (and hence selectable OCRS) do not exist for all matroids; more precisely neither graphic nor transversal matroids admit them. In addition, they provide an oblivious $\frac{1}{e}$-selectable OCRS for uniform rank-$1$ matroids.

Our main goal is to show that, despite this negative result, there is hope to obtain strong ROCRS without having to know the full point $x$ upfront. We say that a ROCRS is \emph{distribution unknown} if the point $x \in P_{\M}$ is not known upfront but rather revealed as elements appear. That is, when an element $e$ is revealed, we learn whether it is active together with its marginal value $x_e$. Note that this is less restrictive than the above-mentioned oblivious setting, where no information on $x$ is gained or used throughout the algorithm. We find the assumption of having access to $x_e$ natural as it resembles the input one gets in an online rounding procedure. In addition, our interest for the problems above partially stems from connections to the matroid secretary problem (and its fractional variant), where having access to the $x_e$ values plays an important role. We discuss this in more detail in \cref{sec:connectionMSP}.

Moreover, regarding prior knowledge about the matroid, we call a ROCRS \emph{matroid unknown} if it only knows the cardinality of the matroid upfront, and the matroid is revealed as elements appear.%
\footnote{Knowing the cardinality upfront is a very common assumption and necessary to have any hopes for constant-selectable algorithms to exists. As we see later, this is also implied by a stronger impossibility result of \cite{fu2022oblivious}.}
Finally, we call a ROCRS \emph{distribution and matroid unknown} if it is both distribution and matroid unknown.

\subsection{Connections to the matroid secretary problem}\label{sec:connectionMSP}

An instance of the matroid secretary problem \MSP \cite{babaioff2018matroid} consists of a matroid $\M=(E,\I)$ and weights $w\colon E \to \R_{\geq 0}$. The elements of the matroid are revealed online in uniformly random order. Once an element $e\in E$ arrives, we see its weight $w_e$ and must decide irrevocably whether to pick it or not. The algorithm must always output an independent set $S \in \I$, and the goal is to maximize (in expectation) its total weight $w(S)\coloneqq \sum_{e\in S} w_e$. It is a major open question whether MSP admits a constant-factor competitive algorithm, and the \emph{matroid secretary conjecture} claims this to be the case.

Interestingly, the matroid secretary conjecture remains open even for sparse weight functions, where the elements $\supp(w)\coloneqq \{e\in E \colon w(e) > 0\}$ can be partitioned into constantly many independent sets, or even just two independent sets. A very related way to phrase the same problem is to assume that we are given a matroid whose ground set is the union of $k$ independent sets, however the algorithm does not know the matroid upfront. We denote by $\MSP(k)$ this restricted sparse version of MSP. Clearly, if the matroid was known upfront (and $k$ is constant), it would be trivial to obtain a $k$-competitive algorithm for $\MSP(k)$ by first partitioning the ground set into $k$ independent sets and selecting uniformly at random all elements of one of these sets, independently of the revealed weights. We highlight that not knowing the matroid upfront is a natural assumption, as one can obfuscate an instance by adding $0$-weight elements to the matroid.%
\footnote{Note that this is not a formal proof of equivalence between the MSP conjecture with known or unknown matroid.
However, especially in general matroids, which is what the MSP conjecture is about, there are many ways to obfuscate an instance by adding $0$-weight elements, which, even if the matroid is known upfront, makes it much harder to exploit this.}
This is also reflected by the currently best $\Omega(\log \log (\rank))$-competitive algorithms for MSP for general matroids~\cite{lachish_2014_competitive,feldman_2018_simple}, which do not need to know the matroid upfront. Interestingly, despite the existence of numerous constant-competitive procedures for MSP for specific classes of matroids~\cite{%
babaioff2018matroid,%
korula_2009_algorithms,%
im_2011_secretary,%
soto_2013_matroid,%
jaillet_2013_advances,%
ma_2013_simulatedSTACS,%
dimitrov_2012_competitive,%
kesselheim_2013_optimal,%
dinitz_2013_matroid,%
},
even for graphic matroids (and other non-trivial matroid classes), no constant-competitive procedure is known for $\MSP(2)$ when only the cardinality of the matroid is known in advance and one has access to an independence oracle for elements revealed so far. The reason is that MSP procedures for specific matroid classes typically assume either full knowledge of the matroid upfront or that revealing elements also reveal additional information about the matroid, like the part of an explicit representation of the matroid corresponding to the revealed element.

Note that a matroid unknown constant-selectable ROCRS solves $\MSP(k)$ for constant $k$ because we can run the ROCRS with the uniform point $x=(\frac{1}{k},\frac{1}{k},\ldots,\frac{1}{k})$. As the ground set of the matroid can be partitioned into $k$ independent sets, the point $x$ lies in the matroid polytope. The ROCRS being constant-selectable implies that each element will be picked with probability $\sfrac{c}{k}$ for some constant $c \in [0,1]$, thus selecting each element with constant probability. In other words, a matroid unknown constant-selectable ROCRS can solve the following \emph{matroid online fair selection} problem (\MOFS), which implies a constant-selectable algorithm for $\MSP(k)$ (for constant $k$) and, so we think, is interesting in its own. Assume we are given a matroid $M$ whose ground set is the union of $k$ independent sets. The matroid is unknown to the algorithm upfront and reveals one element at a time. Whenever an element reveals, the algorithm has to decide irrevocably whether to accept it. The task is to select each element with probability $\Omega(\sfrac{1}{k})$.

We summarize the connections among the problems introduced so far in \cref{fig:problems}.

\begin{figure}[ht]
    \centering
    \begin{tikzpicture}[xscale=2]
        \node (dmu) at (0, 0) {Distribution and matroid unknown ROCRS};
        \node (du) at (-1.5, -1) {Distribution unknown ROCRS};
        \node (mu) at (1.5, -1) {Matroid unknown ROCRS};
        \node (std) at (0, -2) {(Standard) ROCRS};
        \node (mop) at (2, -2) {MOFS};
        \node (msp) at (2, -3) {Sparse MSP};
        \node (spacing) at (2, -3.5) {};

        \draw[->] ([xshift=-5pt]dmu.south) -- (du.north);
        \draw[->] ([xshift=5pt]dmu.south) -- (mu.north);
        \draw[->] (du.south) -- ([xshift=-5pt]std.north);
        \draw[->] ([xshift=-5pt]mu.south) -- ([xshift=5pt]std.north);
        \draw[->] ([xshift=5pt]mu.south) -- (mop.north);
        \draw[->] (mop.south) -- (msp.north);
    \end{tikzpicture}
    \caption{Relations between mentioned problems. An arrow from setting $\mathrm{A}$ to setting $\mathrm{B}$ indicates that a constant-selectable/constant-competitive algorithm for $\mathrm{A}$ implies one for $\mathrm{B}$.}\label{fig:problems}
\end{figure}
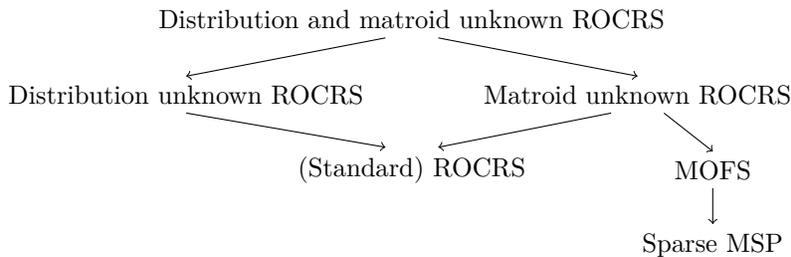

Another related problem is the \emph{fractional matroid secretary problem} (\FMSP).%
\footnote{While this problem is known in the community, we are not aware of any previous work on it.}
An instance of the latter is almost identical to one of \MSP, the only difference being that one can pick elements \emph{fractionally}, in the sense that the output is a vector $y \in P_{\M}$. The goal is still to maximize the expected weight of the output, in this case given by $w^\top y$. Fractional \MSP is clearly no harder than classical \MSP. However, even for fractional \MSP, it is unknown whether a constant-competitive algorithm exists. 

One arguably natural approach to solve the \MSP conjecture is to first solve the (potentially easier) fractional version, and then look for online rounding procedures that transform an \FMSP solution into an (integral) \MSP one, while losing only a constant factor of the objective value. One may wonder whether a distribution and matroid unknown $\Omega(1)$-selectable ROCRS immediately implies the existence of such a rounding procedure. Unfortunately the answer is no. The reason is that the output of an \FMSP algorithm may highly depend on the arrival order of the input (i.e., two different arrival orders may lead to outputs $y_1,y_2 \in P_{\M}$ where $y_1$ is not a permutation of $y_2$), while a ROCRS assumes the hidden distribution $x$ is fixed beforehand, and then revealed to the algorithm in random order. However, any such rounding procedure that allows for online rounding an \FMSP solution into an \MSP one is a ROCRS, and, in particular, also provides a solution to $\MOFS$ (and thus to sparse $\MSP$ as well).

\subsection{Our results}\label{sec:results}

Our main result is a simple $\Omega(1)$-selectable distribution and matroid unknown ROCRS for graphic matroids, where we assume that, whenever an edge reveals, it reveals its endpoints. We allow graphs to have parallel edges, and we assume without loss of generality that they do not contain loops. The procedure only needs to know the cardinality of the ground set upfront, i.e., the number of edges of the graph, so that it can sample a constant fraction of the elements. We emphasize that we predominantly focus on providing a simple algorithm and analysis, and make no attempt at optimizing the selectability constant.

\begin{theorem}\label{thm:main}
    Let $G=(V,E)$ be a graph, $\M=(E,\I)$ its graphic matroid, and $P_{\M}$ the corresponding matroid polytope. Then there exists a distribution and matroid unknown $\frac{1}{96}$-selectable random order contention resolution scheme for $P_{\M}$.
\end{theorem}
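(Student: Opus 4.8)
The plan is to work directly in the \emph{adversarial order with a sample} model, which is more general than a uniformly random order (the first constant fraction of a random permutation is a uniformly random subset). Concretely, a random set $S$ consisting of a constant fraction — say one half — of the edges is revealed first, together with the endpoints and the marginals $x_e$ of its elements, but no edge of $S$ may ever be selected; the remaining edges then arrive in adversarial order and must be handled online, knowing $|E|$. The whole challenge is to exploit the preview provided by $S$ to commit to a good online policy for the rest.

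The structural backbone is a reduction to independent rank-$1$ problems, one per vertex. Fix any total order $\prec$ on $V$, assign each edge $\{u,v\}$ to its $\prec$-minimal endpoint, and orient it from that endpoint to the other. The resulting orientation is acyclic (following orientations is $\prec$-increasing), and an acyclic orientation in which every vertex has out-degree at most one has no undirected cycle; hence any edge set leaving at most one assigned edge per vertex is a forest. It therefore suffices to (i)~fix a total order $\prec$ under which the \emph{load} of every vertex, i.e.\ the total $x$-mass of the edges assigned to it, is $O(1)$, and (ii)~at each vertex $v$ run a single-item scheme on the active edges assigned to $v$ that keeps at most one of them and keeps each assigned edge $e$ with probability $\Omega(x_e)$. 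Step (ii) is routine: the assigned mass at $v$ is $O(1)$; sampled edges of $v$ are useless but this only costs the factor $\Pr[e\notin S]$; after scaling the usable edges down by a constant so that their total mass is at most $1$ and running a simple $\Omega(1)$-selectable single-item OCRS (for instance ``accept the first active usable edge with probability $\tfrac12$''), every assigned edge $e$ is kept with probability $\Omega(x_e)$.

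For step (i) the right order comes from a recursive heavy/light peeling. Call a vertex \emph{light} in a subgraph if its incident $x$-mass there is at most a suitable constant $\tau$ with $\tau\ge 2$. Set $H_0=V$ and let $H_{i+1}$ be the set of vertices that are heavy in the subgraph induced by $H_i$. Since $x\in P_{\M}$ gives $x(E[H_i])\le |H_i|-1$, and $\sum_{v\in H_i}x(\delta_{E[H_i]}(v))=2x(E[H_i])$, some vertex of a non-empty $H_i$ is light whenever $\tau\ge 2$; hence $H_0\supsetneq H_1\supsetneq\cdots$ reaches $\emptyset$. Let $\ell(v)$ be the first level at which $v$ is light, and order $V$ primarily by $\ell(\cdot)$, breaking ties in any fixed way. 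A short argument shows that every edge assigned to $v$ lies in $E[H_{\ell(v)-1}]$ and is incident to $v$, so $v$'s load is at most $\tau$. This produces a good order, but one that depends on the entire, unknown instance.

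The crux — and the step I expect to be the main obstacle — is to extract a serviceable order from $S$ alone. The scheme estimates every vertex's incident mass in every relevant induced subgraph by rescaling $\sum_{e\in\delta(v)\cap S}x_e$, runs the peeling on these estimates with a generous threshold gap to obtain estimated levels $\hat\ell(\cdot)$ and an estimated order, and declares any vertex incident to no sampled edge light. The subtlety is that one \emph{cannot} union-bound correct classification over all $n$ vertices: for a vertex whose incident mass is $\Theta(1)$ the estimate is correct only with constant probability. The analysis must therefore be carried out per edge: for a fixed edge $e$ one shows that with probability $\Omega(1)$ over $S$ the endpoint that the estimated order declares responsible for $e$ has $O(1)$ actual load — the relevant bad configurations around $e$ are few, and near the threshold a misclassification is a constant-probability (hence tolerable) event, for which one-sided Chernoff bounds suffice — and then one multiplies this by $\Pr[e\notin S]$ and the single-item guarantee of step (ii). Keeping track of all the constants — the sampling fraction, the threshold $\tau$ and its slack, the single-item selectability, and the per-edge success probability — is what ultimately yields the stated $\tfrac1{96}$. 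One should also be careful that the estimated order, and hence each edge's responsible endpoint, is fixed once at the end of the sampling phase so the per-vertex subproblems are well defined, and that the forest property is guaranteed on \emph{every} outcome, not merely the typical ones.
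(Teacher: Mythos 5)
Your overall architecture matches the paper's: sample half the edges, build a vertex order from the sample, assign each non-sampled edge to its smaller endpoint (which guarantees a forest on every outcome), and run a per-vertex ``first active non-excluded edge'' rule whose selectability follows from a Markov bound on the assigned mass. Your offline ordering (heavy/light peeling by levels) is a coarser but valid variant of the paper's minimum-load peeling, and step (ii) is indeed routine. The problem is that the step you yourself flag as the crux is not proved, and the sketch you give for it does not address the actual difficulty. The danger is not that a vertex near the threshold gets misclassified --- that is a local, constant-probability event as you say --- but that for the endpoint $v$ declared responsible for a fixed edge $e$, the \emph{unsampled} $x$-mass of edges going from $v$ to vertices placed above $v$ in the sample-built order is not pointwise controlled by the sampled mass. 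The peeling guarantee transfers to the sample-based order only for sampled edges (the analogue of the paper's bound $x(\{e=\{v,u\}\in\delta(v)\cap S : v\prec_S u\})\le 2$); for a vertex of large true degree mass, the non-sampled mass above it can be super-constant even when its estimated level is perfectly ``correct,'' and whether $v$ is responsible for $e$ only requires $v$ to sit below $u$, not to be light at a low level. Bounding this quantity involves the joint classification of \emph{all} of $v$'s neighbors across recursively defined estimated levels $\hat H_i$ (which themselves deviate from the true $H_i$, so errors propagate), and these events are many, correlated with each other and with the conditioning event $e\notin S$; a ``few bad configurations plus one-sided Chernoff'' argument does not obviously close this, and no union bound over neighbors is available for the same reason you rule out a union bound over vertices.

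This is precisely where the paper's main technical content lives: it proves an \emph{expectation} bound $\E_S[x(E^S_v\setminus S)]\le 3$ via an injection $S\mapsto S\cup\{e\}$ coupled through an auxiliary vertex $w_S(v)$, together with a stability lemma describing how the order changes when one edge is added to the sample; moreover, the appendix exhibits a counterexample showing that the most natural coupling inequality $\Pr_S[(e\notin S)\wedge(v\prec_S u)]\le\Pr_S[(e\in S)\wedge(v\prec_S u)]$ is simply false, so some genuinely new idea is required at exactly the point your sketch waves through. Without a proof of your per-edge claim (``with probability $\Omega(1)$ the responsible endpoint has $O(1)$ actual load''), the Markov step has nothing to bite on, and the asserted constant $\frac{1}{96}$ is likewise unsubstantiated, since it is never derived from the (unspecified) sampling fraction, threshold $\tau$, exclusion probability, and failure probabilities. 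So the proposal describes a plausible program, close in spirit to the paper, but it has a genuine gap at its central step.
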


Like many other procedures in this context, ours has first an observation phase, which samples a set $S \subseteq E$ containing a constant fraction of the elements without picking any. We use this to learn some structure of the underlying instance, and then run a second phase where elements are selected, which is the core of our procedure. Our analysis of the second phase does not use the values of the marginals $x_e$ from $e \in E\setminus S$, nor the assumption that elements from $E\setminus S$ arrive in uniformly random order. Thus, our procedure and analysis still hold in the more general \emph{adversarial order with a sample} setting, where we are allowed to sample a random constant fraction of the elements and the remaining (non-sampled) elements arrive in adversarial order. Moreover, for non-sampled elements, we only need to know whether they are active or not without needing to learn their $x$-values.

As discussed above, \cref{thm:main} immediately implies the following.
\begin{corollary}\label{cor:MOPresult}
    There is a $\frac{1}{96}$-selectable algorithm for \MOFS on graphic matroids. 
\end{corollary}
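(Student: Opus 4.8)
The plan is to obtain \cref{cor:MOPresult} directly from \cref{thm:main}, formalizing the reduction outlined in \cref{sec:connectionMSP}. Consider an \MOFS instance given by a graphic matroid $\M=(E,\I)$ whose ground set admits a partition $E = S_1 \cupp \dots \cupp S_k$ into $k$ independent sets (the algorithm knows $k$ and, as is standard in this context, the cardinality $|E|$). First I would verify that the uniform point $x \coloneqq \tfrac{1}{k}\mathbf{1}_E$ lies in the matroid polytope: since the $S_i$ partition $E$, we have $x = \tfrac1k \sum_{i=1}^k \mathbf{1}_{S_i}$, a convex combination of characteristic vectors of independent sets, hence $x \in P_{\M}$. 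This is exactly the hypothesis needed to invoke the ROCRS of \cref{thm:main} for $P_{\M}$.

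Next, I would run the distribution and matroid unknown $\tfrac{1}{96}$-selectable ROCRS of \cref{thm:main} in the background while processing the \MOFS instance, simulating the activeness of elements ourselves. Concretely, whenever an element $e \in E$ reveals (together with its endpoints), flip an independent coin that comes up ``active'' with probability $x_e = \tfrac1k$; if it does, hand $e$ to the ROCRS as an active element with marginal value $\tfrac1k$, and otherwise reject $e$ immediately. In the \MOFS instance, accept $e$ precisely when the ROCRS decides to pick it. Since a ROCRS only ever picks elements it has been told are active, and since the simulated activeness bits of distinct elements are independent with the correct marginals $x_e$, the stream seen by the ROCRS is exactly a sample from the ROCRS input model for the point $x$, with the elements arriving in the same (uniformly random) order as in the \MOFS instance; moreover the set of accepted elements coincides at all times with the set maintained by the ROCRS, which is independent by construction, so the output is feasible for \MOFS.

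It then remains only to read off the guarantee: $\tfrac{1}{96}$-selectability of the ROCRS gives that each $e \in E$ is picked with probability at least $\tfrac{1}{96}\, x_e = \tfrac{1}{96k}$, which is precisely the $\tfrac{1}{96}$-selectability requirement for \MOFS on graphic matroids.

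I do not foresee a genuine obstacle here, as this is a black-box reduction; the only points meriting (minor) care are the membership $x \in P_{\M}$ and the observation that self-generating the activeness coins disturbs neither the random arrival order nor the independence property exploited by \cref{thm:main}. If one wishes to state the corollary in the stronger \emph{adversarial order with a sample} model, the same reduction applies verbatim on top of the corresponding strengthening of \cref{thm:main} remarked on right after its statement.
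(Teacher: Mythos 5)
Your proposal is correct and matches the paper's own (implicit) argument: the corollary is obtained exactly by running the ROCRS of \cref{thm:main} on the uniform point $x=\frac{1}{k}\mathbf{1}_E\in P_{\M}$, with the activeness coins simulated by the algorithm, as sketched in \cref{sec:connectionMSP}. Your additional remarks on membership of $x$ in $P_{\M}$ and on the self-generated coins preserving the ROCRS input model are just a more explicit write-up of the same reduction.
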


\section{Algorithm}\label{sec:algorithm}

Let us first present a canonical way to obtain a constant-selectable procedure if we know the graph $G$ and the vector $x\in P_{\mathcal{M}}$ upfront. This is a nice warm-up before we present our ROCRS and, moreover, it allows us to highlight important hurdles that appear in the matroid and distribution unknown setting when this prior information is not available.

\subsection{Warm up: a simple constant-selectable procedure with prior knowledge}\label{sec:priorKnowledge}

We define a total order on the vertices iteratively, by first identifying the smallest vertex in the ordering, then the next smallest one and so on. The ordering is obtained by successively picking a vertex $v$ for which $x(\delta(v))$, i.e., the sum of the $x$-values on edges incident with $v$, is smallest, and then deleting it and iterating. More formally, let $Q\subseteq V$ be the vertices ordered so far. (Hence, we have $Q=\emptyset$ at the beginning.) As the next vertex we pick one minimizing the $x$-load of edges to vertices in $V\setminus Q$, i.e., we pick a minimizer of
\begin{equation*}
    \min_{v\in V\setminus Q} x(E(v,V\setminus Q)),
\end{equation*}
where $E(v,V\setminus Q)$ are all edges with one endpoint being $v$ and the other one belonging to $V\setminus Q$. We call such an ordering an $x$-topological ordering and write $v \prec u$ for $v, u \in V$ if $v$ is ordered before $u$.

Note that the sets
\begin{equation*}
    E_{v}^{\prec} \coloneqq \{e=\{v,u\}\in \delta(v) \colon v\prec u\},
\end{equation*}
partition all edges $E$. A constant-selectable random order contention resolution scheme is now obtained by selecting an appearing edge $e = \{v,u\} \in E$ (say $e\in E_{v}^{\prec}$) according to the following rule. If $e$ is active and no other edge of $E_{v}^{\prec}$ has been selected so far, then select $e$ with probability $\sfrac{1}{8}$; otherwise, do not select $e$. One can think of this as flipping upfront a biased coin for each edge and declaring it \emph{excluded} with probability $\sfrac{7}{8}$. We then simply select greedily the first active and non-excluded edge for each $E_{v}^{\prec}$.

First note that this will return a forest due to the following well-known and simple observation.
\begin{observation}\label{obs:returnedSetIsForest}\crefformat{observation}{#2Observation~#1#3}
    Let $G = (V, E)$ be a graph with a total vertex order '$\prec$', and let $T\subseteq E$ with $|T\cap E_{v}^{\prec}|\leq 1$ for $v\in V$. Then $T$ is a forest.
\end{observation}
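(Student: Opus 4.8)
The plan is to argue by contradiction: if $T$ is not a forest it contains a cycle, and from that cycle I will extract a single vertex $v$ violating the hypothesis $|T\cap E_v^{\prec}|\le 1$.

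First I would recall the structural fact noted just before the statement: every edge $e=\{v,u\}\in E$ lies in exactly one of the sets $E_w^{\prec}$, namely the one indexed by the $\prec$-smaller of its two endpoints. Hence the $E_w^{\prec}$ partition $E$, and the hypothesis says precisely that $T$ contains at most one edge whose $\prec$-smaller endpoint is $w$, for each $w\in V$.

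Now suppose for contradiction that $T$ contains a cycle $C$. Since $G$ has no loops, $C$ has at least two distinct vertices and at least two distinct edges (even when $C$ consists of a pair of parallel edges). Let $v$ be the $\prec$-smallest vertex occurring on $C$. Then $C$ uses two distinct edges $e_1=\{v,u_1\}$ and $e_2=\{v,u_2\}$ incident with $v$, and by minimality of $v$ on $C$ we have $v\prec u_1$ and $v\prec u_2$, so $e_1,e_2\in E_v^{\prec}$. As $e_1\ne e_2$, this yields $|T\cap E_v^{\prec}|\ge 2$, contradicting the assumption. Therefore $T$ is acyclic, i.e., a forest.

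I do not expect a genuine obstacle here; the only points that need a moment's care are that, in a graph with parallel edges, a ``cycle'' may be just two parallel edges between two vertices (still two distinct edges, both in $E_v^{\prec}$ for the smaller endpoint $v$), and that the absence of loops guarantees the $\prec$-smallest vertex of a cycle really has two distinct incident cycle-edges. As an alternative to the cycle argument one could count: in any connected component of $T$ with vertex set $W$, no edge has the $\prec$-largest vertex of $W$ as its $\prec$-smaller endpoint, so the component has at most $|W|-1$ edges and is thus a tree, which again shows $T$ is a forest.
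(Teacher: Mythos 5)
Your argument is correct and is essentially the paper's own proof: the paper also observes that any cycle in $G$ must contain at least two edges of $E_v^{\prec}$ for the $\prec$-smallest vertex $v$ of the cycle, which contradicts $|T\cap E_v^{\prec}|\le 1$. Your additional care about parallel edges and the alternative counting argument go beyond the paper's one-line justification but do not change the approach.
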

Indeed, the observation holds because any cycle $C$ in $G$ must contain at least $2$ edges from $E_{v}^{\prec}$ where $v$ is the first vertex of $C$ with respect to the vertex order '$\prec$'.

The crucial property we need of an $x$-topological ordering to get constant selectability is
\begin{equation}\label{eq:evSmall}
    x(E_{v}^{\prec}) \leq 2 \qquad \forall v\in V.
\end{equation}
To see that this property holds, let $v\in V$, and we denote by $Q\coloneqq E_v^{\prec}$ the vertices considered before $v$ in the construction of the $x$-topological ordering '$\prec$'. Observe first that $x(E[V\setminus Q])\leq |V\setminus Q|-1$, where $E[V\setminus Q]$ are all edges with both endpoints in $V\setminus Q$, because $x$ is in the forest polytope. Indeed, no forest can contain more than $|V\setminus Q|-1$ edges with both endpoints in $V\setminus Q$, which immediately bounds the $x$-value of any convex combination of forests. Because $2 x(E[W]) = \sum_{u\in W} x(E(u, W))$ for any $W \subseteq V$, which can be interpreted as a fractional version of the handshaking lemma, we have
\begin{equation*}
    \sum_{u\in V\setminus Q} x(E(u, V\setminus Q)) = x(E[V\setminus Q]) \leq 2|V\setminus Q| - 2,
\end{equation*}
which implies~\eqref{eq:evSmall} by an averaging argument, because $v$ minimizes $x(E[u,V\setminus Q])$ among all vertices $u\in V\setminus Q$.

Now consider the probability of selecting an edge $e=\{v,u\}\in E$. Edge $e$ will certainly be selected if the following two conditions hold simultaneously:
\begin{enumerate}
    \item $e$ is active and not excluded.
    \item\label{item:xKnownEdgesExcludedOrNonActive} Each edge in $(E_{v}^{\prec} \cup E_{u}^{\prec}) \setminus \{e\}$ is either excluded or not active.
\end{enumerate}
Observe that these two events are independent, the first one happens with probability $\sfrac{x_e}{8}$, and the second one with constant probability. To see the latter, note that the expected number of active edges in $E_{v}^{\prec} \cup E_{u}^{\prec}$ is $x(E_{v}^{\prec} \cup E_{u}^{\prec}) \leq x(E_{v}^{\prec}) + x(E_{u}^{\prec}) \leq 4$ due to~\eqref{eq:evSmall} and hence
\begin{equation}\label{eq:xKnownExpBadEdges}
    \E[|\{f\in (E_{v}^{\prec} \cup E_{u}^{\prec})\setminus \{e\} \colon f \text{ is active and not excluded}\}|] = \frac{1}{8} \E[|\{f\in (E_{v}^{\prec} \cup E_{u}^{\prec}) \setminus \{e\} \colon f \text{ is active} \}|] \leq \frac{1}{2}.
\end{equation}
Then applying Markov's inequality to~\eqref{eq:xKnownExpBadEdges} immediately implies that event~\ref{item:xKnownEdgesExcludedOrNonActive} happens with probability at least $\frac{1}{2}$. Thus the contention resolution scheme has constant selectability. (More precisely, the above reasonings imply a selectability of $\sfrac{1}{16}$.)

\subsection{Our procedure}

We show that a simple and canonical extension of the prior knowledge algorithm discussed in~\cref{sec:priorKnowledge} to our setting without this prior knowledge leads to a constant-selectable ROCRS. Although the extended algorithm may seem straightforward at first, the key challenge lies in its analysis. More precisely, we first observe a sample $S \subseteq E$ of edges and define an $x$-topological ordering of the subgraph over only the edges of $S$ and only using the $x$-values of those edges (which are known after sampling $S$). We denote the obtained ordering '$\prec_S$', where the subscript highlights that the $x$-topological ordering now depends on the sample $S$. Then we simply run the same algorithm as in \cref{sec:priorKnowledge} (albeit with a different probability to exclude edges) on the remaining edges $E \setminus S$ using the ordering '$\prec_S$'. The resulting procedure is presented in \cref{alg:vrp}.
\begin{algorithm2e}[ht]
    \caption{Distribution and matroid unknown ROCRS for graphic matroids.}\label{alg:vrp}

    \DontPrintSemicolon
    \SetKwFunction{FnVRP}{VRP}

    \SetKwInput{KwInput}{Information known upfront}
    \SetKwInput{KwOnlineInfo}{Information revealed online}
    \KwInput{$m = |E|$}
    \KwOnlineInfo{For each arriving $e \in E$: its endpoints, $x_e$, and whether $e$ is active}
    \KwOutput{Forest $F \subseteq E$}

    Sample $s \sim \mathrm{Binom}(m, \frac{1}{2})$, observe the set $S \subseteq E$ of the first $s$ arriving edges\;

    Let $\prec_{S}$ be the $x$-topological ordering of $V$ with respect to the sample $S$\;

    \For{every $e = \{v, u\} \in E \setminus S$ arriving online}{
        Without loss of generality assume that $v \prec_{S} u$ (otherwise swap $v$ and $u$)\;
        \If{$e$ is active and no edges $\{v, t\}$ with $v \prec_{S} t$ were previously picked}{
            Pick $e$ with probability $\frac{1}{24}$
        }
    }
\end{algorithm2e}
Although this algorithm naturally extends the offline procedure, several important questions arise regarding both its implementation and, most importantly, its analysis. Indeed, even though we can mimic the definition of $E^\prec_v$ from \cref{sec:priorKnowledge} and introduce a partitioning of $E$ induced by the order '$\prec_{S}$' via
\[
    E_{v}^{\prec_S} = \{e = \{v, u\} \in \delta(v) \colon v \prec_{S} u\},
\]
it is no longer true that the partitioning defined this way satisfies property~\eqref{eq:evSmall}. (In fact, $x(E_{v}^{\prec_S})$ can even be super-constant depending on the sampled set $S$.) Nevertheless, note that \cref{alg:vrp} clearly returns a forest due to \cref{obs:returnedSetIsForest}. We discuss the implementation details first and continue with the analysis of the algorithm later in~\cref{sec:analysis}. For ease of notation, we use the shorthand $E_v^S$ for $E_v^{\prec_S}$.

Let us outline the issues that need to be addressed to complete the description of \cref{alg:vrp}. First, recall that here, in contrast to the prior knowledge setting, we cannot access the whole vertex set upfront. Thus, our algorithm can only define the $x$-topological order for the vertices that have been seen in the sampling phase (as endpoints of edges in $S$) and has to maintain the ordering as more vertices are revealed over time. In addition, while the procedure from \cref{sec:priorKnowledge} could break ties between minimizers of $x(\delta(v))$ arbitrarily, in our setting, we handle tiebreaking more carefully, as it may have a non-trivial impact on the resulting partition and hence the selectability of our algorithm, especially due to the randomness of the sample set $S$.

First let us address the questions related to the $x$-topological ordering by modifying the naive procedure. To this end, assume for now that a tiebreaking rule is fixed; we will discuss how to properly implement tiebreaking later. Now, let $T$ be the set of all endpoints of the edges in the sample set $S$ and note that we have full access to $T$ once $S$ is observed. With that, we can initialize $G'$ to be the edge-induced subgraph $G[S] = (T, S)$ of $G$ and apply the usual iterative process to $G'$ in order to define the $x$-topological ordering on $T$. The remaining vertices (i.e., those not in $T$) can be added to the $x$-topological ordering once we observe them for the first time (as an endpoint of an arriving edge from $E \setminus S$) and, in accordance with the offline method of defining the ordering, such vertices come before $T$ in the ordering and are ordered amongst themselves as per the tiebreaking rule. More formally, the first time our algorithm receives an edge incident to a vertex $v \in V \setminus T$, we set $v \prec_{S} t$ for all $t \in T$ and either $v \prec_{S} u$ or $u \prec_{S} v$ according to the tiebreaking rule for all other previously observed $u \in V \setminus T$. Thus, given a fixed tiebreaking rule, the $x$-topological ordering is uniquely defined in the distribution and matroid unknown setting for all vertices.

Now it only remains to address how to break ties whenever they occur. The property we seek form a tiebreaking rule is that it is ``consistent'', in the sense that it does not depend on the arrival order of the edges. If the vertex set $V$ was revealed upfront, a canonical way of consistent tiebreaking would be to arbitrarily label the vertices $v_{1}, v_{2}, \dots, v_{n}$ at the start; then during the procedure we could break ties by choosing the vertex $v_{j}$ with the lowest index $j$ among all the candidates. However, this method needs to know $V$ in advance. We overcome this issue by generating a uniformly random labeling on the fly as follows. Throughout the procedure, maintain a labeling of the vertices seen so far; whenever a new vertex is revealed, insert it into the labeling at a uniformly random position (and shift the labels of the consequent vertices). It is not hard to see that this method is equivalent to using a uniformly random vertex labeling that is fixed upfront, but has the added benefit of not requiring any prior knowledge of the vertex set. In particular, this implementation of tiebreaking can be used in the distribution and matroid unknown setting. Moreover, it is clearly a consistent tiebreaking rule. Going forward, we assume that a consistent tiebreaking rule (i.e., the corresponding vertex labeling) is fixed and used when defining the order '$\prec_S$' in \cref{alg:vrp}.

\section{Analysis}\label{sec:analysis}

We recall that the reason why the analysis of the algorithm in \cref{sec:priorKnowledge} does not work anymore is because property \eqref{eq:evSmall} fails for the sets
\begin{equation*}
    E^S_v\coloneqq \{e\in \{v,u\}\in \delta(v)\colon v\prec_S u\},
\end{equation*}
for the ordering '$\prec_S$' constructed in \cref{alg:vrp}. More precisely, in general, $x(E^S_v)$ cannot be bounded by a constant. The bound from \cref{sec:priorKnowledge} now only applies to the sampled edges (and holds for any realization of $S$), i.e.,
\begin{equation}\label{eq:degBoundSOnly}
    x \Big( \big\{ e = \{v, u\} \in \delta(v) \cap S \colon v \prec_{S} u \big\} \Big) \leq 2\enspace.
\end{equation}
Our main technical contribution is to show that the \emph{expected} $x$-values of non-sampled edges in $E^S_v$ can be bounded.
\begin{theorem}\label{thm:boundedExpXVal}
    The $x$-topological ordering of $V$ with respect to the sample $S$ constructed in \cref{alg:vrp} satisfies
    \begin{equation*}
        \E_S [x(E^S_v \setminus S)] \le 3 \qquad \forall v\in V.
    \end{equation*}
\end{theorem}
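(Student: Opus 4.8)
The plan is to fix a vertex $v\in V$ and analyze $\E_S[x(E^S_v\setminus S)]$ via two deterministic facts about the $x$-topological ordering of $(V,S)$, valid for \emph{every} realization of $S$. Write $R^S_v$ for the set of vertices still present at the moment $v$ is eliminated in the construction of $\prec_S$, so that $E^S_v=E(v,R^S_v)$. First, property~\eqref{eq:degBoundSOnly} says $x(E^S_v\cap S)\le 2$. Second, since $E(v,R^S_v)\subseteq E[R^S_v]$ and $x$ lies in the forest polytope, $x(E^S_v)=x(E(v,R^S_v))\le x(E[R^S_v])\le |R^S_v|-1$. Together with the obvious $x(E^S_v)\le x(\delta(v))$, this gives $x(E^S_v\setminus S)\le x(E^S_v)\le\min\{x(\delta(v)),\,|R^S_v|-1\}$; a vertex with small fractional degree is thus immediately fine, and the whole difficulty is about vertices $v$ with $x(\delta(v))$ large. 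For those the plan is to show that $v$ is eliminated only near the end of the process, making $|R^S_v|-1$ small in expectation.

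The intuition behind ``large fractional degree $\Rightarrow$ eliminated late'' is concentration plus the forest polytope. Whenever $v$ is the vertex eliminated at a step with remaining set $R$, it minimizes the sampled load over $R$, which by the fractional handshake identity $\sum_{w\in R}x(S\cap E(w,R))=2\,x(S\cap E[R])\le 2(|R|-1)$ is at most $\tfrac{2(|R|-1)}{|R|}<2$; so $x(S\cap E(v,R))<2$ necessarily. On the other hand, for a fixed set $R$ the sampled part of $E(v,R)$ is a fair subsample of a set of total $x$-weight $x(E(v,R))$, so if this weight is large then a Chernoff bound for sums of independent $[0,1]$-valued variables makes it exponentially unlikely that the sampled weight stays below $2$. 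Quantifying this and integrating — i.e.\ bounding $\E_S\bigl[\min\{x(\delta(v)),|R^S_v|-1\}\bigr]=\int_0^\infty \P_S[x(E^S_v)>t]\,dt$ — should yield a constant; and writing $x(E^S_v\setminus S)=x(E^S_v)-x(E^S_v\cap S)$ and invoking property~\eqref{eq:degBoundSOnly} once more saves a little, which is presumably why the statement is phrased for $E^S_v\setminus S$ rather than $E^S_v$.

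To make this rigorous I would expand over edges and expose the randomness one edge at a time. By linearity,
\[
\E_S[x(E^S_v\setminus S)]=\sum_{e=\{v,u\}\in\delta(v)}x_e\cdot\P_S\bigl[\,e\notin S \ \text{and}\ v\prec_S u\,\bigr].
\]
Fixing $e$ and conditioning on $S\setminus\{e\}$ (whose containing $e$ is an independent fair coin), this factor equals $\tfrac12\,\P[v\prec_{S\setminus\{e\}}u]$, so one is led to compare the elimination orders of $(V,S\setminus\{e\})$ and $(V,S\cup\{e\})$ via a coupling; the sampled-forward analogue $\sum_{e}x_e\,\P_S[v\prec_{S\cup\{e\}}u]=2\,\E_S[x(E^S_v\cap S)]\le 4$ is controlled by~\eqref{eq:degBoundSOnly}, and the remaining task is to bound how much forcing $e$ out of, rather than into, the sample can increase the weighted sum.

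The main obstacle is exactly this comparison. The $x$-topological ordering is a global, adaptive object: $R^S_v$ is itself $S$-dependent, and conditioning on ``$v$ is eliminated at a given step'' biases the sample of the edges at $v$ towards being light, so the clean Chernoff estimate cannot be applied verbatim; moreover inserting a single edge $e=\{v,u\}$ into the sample is \emph{not} monotone with respect to the relative order of $v$ and $u$ — a third vertex can overtake $v$ and set off a cascade of reorderings that flips $u$ and $v$ in either direction. So the coupling must track this cascade and charge the resulting discrepancy back against the constant bound~\eqref{eq:degBoundSOnly} and the forest-polytope inequality $x(E[R])\le|R|-1$; I expect essentially all of the technical work to sit here.
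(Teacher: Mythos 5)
Your proposal sets up the right reduction---expanding $\E_S[x(E^S_v\setminus S)]=\sum_{e=\{v,u\}\in\delta(v)}x_e\Pr_S[(e\notin S)\wedge(v\prec_S u)]$ and trying to compare each term against its sampled counterpart via the map $S\mapsto S\cup\{e\}$---and you correctly diagnose that the naive comparison is not monotone: adding $e$ to the sample can flip the relative order of $v$ and $u$ (the paper gives explicit counterexamples in \cref{sec:appendix}). But at that point the proposal stops: you state that the coupling ``must track this cascade and charge the discrepancy'' and that all the technical work sits there, without supplying the idea that actually makes this work. That missing idea is the heart of the paper's proof (\cref{thm:coupling}): instead of insisting on the event $v\prec_{S\cup\{e\}}u$, one compares against the relaxed event $w_{\tilde S}(v)\preceq_{\tilde S}u$, where $w_{\tilde S}(v)$ is the threshold vertex whose removal first brings the sampled $x$-load at $v$ down to at most $2$ (or the globally minimal vertex if that load is already at most $2$). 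This relaxation costs only the difference between $2$ and $3$ in the load bound, since all parallel edges between $v$ and $w_{\tilde S}(v)$ carry total $x$-value at most $1$ by the forest polytope; and the injection $S\mapsto S\cup\{e\}$ then lands in the relaxed event thanks to a stability fact (\cref{lem:low-rk}) saying every vertex preceding both $v$ and $u$ under $\prec_S$ still precedes them under $\prec_{S\cup\{e\}}$, combined with the deterministic bound \eqref{eq:degBoundSOnly} on sampled edges. None of this is present in, or easily recoverable from, your sketch.

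Your alternative first route---bounding $x(E^S_v)\le|R^S_v|-1$ and arguing via Chernoff that a vertex of large fractional degree is eliminated late---does not go through as described, and you partly concede this. The set $R^S_v$ of vertices remaining when $v$ is eliminated is chosen adaptively by the same sample you want to apply concentration to, so the event ``$v$ is eliminated while $x(E(v,R^S_v))>t$ yet $x(S\cap E(v,R^S_v))<2$'' cannot be handled by fixing $R$ and union bounding: there are exponentially many candidate sets $R$, and for an arbitrary subset $A\subseteq\delta(v)$ the event $x(A)>t$, $x(A\cap S)\le 2$ is not at all unlikely (take $A$ consisting mostly of unsampled edges); only the structure of the elimination process rules this out, which is precisely what the coupling of \cref{thm:coupling} exploits. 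So the proposal identifies the correct obstacles but leaves the decisive step unproven, and the concentration-based detour would need a genuinely new argument to handle the adaptivity.
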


We first observe that this is indeed enough to obtain our main theorem.
To this end we follow the same proof strategy as in \cref{sec:priorKnowledge}, where we had prior knowledge of $x$.
The main difference is, because $x(E_v^S\setminus S)$ is small in expectation, we use a Markov bound to show that with constant likelihood both endpoints of a fixed edge $e$ do not contain any other incident edges (except for $e$) that are both active and non-excluded.
\begin{proof}[Proof of \cref{thm:main}]
    We already discussed that \cref{alg:vrp} returns a forest. Hence, it remains to show that each edge $e$ is selected with probability at least $\sfrac{x_{e}}{96}$. To this end, let $F$ denote the edges selected by the ROCRS and let $e = \{v, u\}$ be an arbitrary edge. Note that a sufficient condition for $e$ to be selected by \cref{alg:vrp} is that the following three events $A$, $B$, and $C$ hold simultaneously:
    \begin{itemize}
        \item[$A$:] $e$ is active and non-excluded.
        \item[$B$:] Each edge in $(E_v^S\cup E_u^S)\setminus (S\cup \{e\})$ is either excluded or inactive.
        \item[$C$:] $e\not\in S$.
    \end{itemize}
    Note that event $A$ is independent of $B$ and $C$ as it only depends on two coin flips for edge $e$, one deciding whether it is active and one deciding whether it is excluded. These coin flips are independent of the set $S$ and coin flips of other elements. Thus we have
    \begin{equation}\label{eq:breakingDownABC}
        \Pr[A \wedge B \wedge C] = \Pr[A] \cdot \Pr[B\wedge C] \geq \Pr[A] \cdot (1 - \Pr[\neg B] - \Pr[\neg C]),
    \end{equation}
    where the second inequality follows from a union bound. Let $D_e^S \coloneqq (E_v^S \cup E_u^S)\setminus (S\cup \{e\})$. Due to \cref{thm:boundedExpXVal}, the expected number of active edges within $D_e^S$ is upper bounded by $6$. Because each element is excluded with probability $\sfrac{23}{24}$, the expected number of active and non-excluded elements of $D_e^S$ is bounded by $\sfrac{1}{4}$. Hence, by Markov's inequality, we have $\Pr[\neg B] \leq \frac{1}{4}$. Putting this into~\eqref{eq:breakingDownABC} together with $\Pr[A]=\sfrac{x_e}{24}$ and $\Pr[C]=\sfrac{1}{2}$, we get $\Pr[A\wedge B \wedge C] \geq \sfrac{x_e}{96}$ as desired.
\end{proof}

It remains to show the main technical result, \cref{thm:boundedExpXVal}. We begin by providing the intuition behind our proof approach. Ideally, we would like to link the $x$-load of vertices in $E_v^S\setminus S$ to the one of vertices in $E_v^S\cap S$. Indeed, because we constructed the ordering $\prec_S$ with respect to the edges in $E_v^S\cap S$, we have $x(E_v^S\cap S)\leq 2$, by the same reasoning as in \cref{sec:priorKnowledge}. A nice and arguably natural property one could wish to have to link these quantities is that for every node $v \in V$ and every incident edge $e = \{v, u\} \in \delta(v)$ the following inequality holds:
\[
    \Pr_{S}[(e \notin S) \wedge (v \prec_{S} u)] \le \Pr_{S}[(e \in S) \wedge (v \prec_{S} u)].
\]
With the above \emph{coupling}-type property, one would be done immediately, because
\begin{align*}
    \E_S [x(E^S_v \setminus S)]
     = \smashoperator[r]{\sum_{e = \{v, u\} \in \delta(v)}} x_{e} \Pr[(e \notin S) \wedge (v \prec_{S} u)]
    \le \smashoperator[r]{\sum_{e = \{v, u\} \in \delta(v)}} x_{e} \Pr[(e \in S) \wedge (v \prec_{S} u)]
     = \E_S [x(E^S_v \cap S)] \le 2.
\end{align*}
Unfortunately, the above property is too strong and does not hold in general, even for small simple instances. We provide such an example in~\cref{sec:appendix}.

We show that a similar, though weaker, coupling property holds, which is nevertheless sufficient to obtain the desired bound on the load of the sets $E_v^S \setminus S$. The key ingredient is to replace the set of events $\{S\subseteq E: (e \in S) \wedge (v \prec_{S} u) \}$ considered above by a larger family of events given by $ \{ S \subseteq E: (e \in S) \wedge (w_S(v) \preceq_{S} u) \}$, where $w_S(v)$ denotes a special node satisfying $w_S(v) \preceq_S v$, to be formally defined later. The next result highlights the two crucial properties we want $w_S(v)$ to have.
\begin{theorem}\label{thm:coupling}
    Let $v \in V$ be an arbitrary node, let $S \subseteq E$ be an arbitrary sample set, and let $\prec_S$ denote the $x$-topological ordering of $V$ with respect to the sample $S$ constructed in \cref{alg:vrp}. Then there is a vertex $w_S(v) \in V$ with the following properties:
    \begin{itemize}\setlength\itemsep{3pt}
        \item $x \big(\{e = \{v, u\} \in \delta(v) \cap S \colon w_{S}(v) \preceq_{S} u\} \big) \le 3$, and
        \item $\Pr_S \left[(e \notin S) \wedge (v \prec_{S} u)\right] \le \Pr_S \left[(e \in S) \wedge (w_{S}(v) \preceq_{S} u)\right]$ for any edge $e=\{v,u\} \in \delta(v)$.
    \end{itemize}
\end{theorem}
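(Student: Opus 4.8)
The plan is to reduce the probabilistic second property to a deterministic statement about the orderings $\prec_S$, to pick $w_S(v)$ so that the first property is automatic, and then to establish the deterministic statement via a monotonicity property of the greedy ``peeling'' defining $\prec_S$. First I would use that, since $s\sim\mathrm{Binom}(m,\frac12)$ and edges arrive in uniform random order, $S$ is a uniformly random subset of $E$ (equivalently, each edge lies in $S$ independently with probability $\frac12$). Hence, for a fixed edge $e$, the map $S\mapsto S\cup\{e\}$ is a measure-preserving bijection from $\{S\colon e\notin S\}$ onto $\{S\colon e\in S\}$, and since $\prec_S$ ignores edges outside $S$ we get $\Pr_S[(e\notin S)\wedge(v\prec_S u)]=\Pr_S[(e\in S)\wedge(v\prec_{S\setminus\{e\}}u)]$. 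So the second property follows once we exhibit $w_S(v)$ such that, for every sample $S$ with $e=\{v,u\}\in S$,
\[
    v\prec_{S\setminus\{e\}}u \;\Longrightarrow\; w_S(v)\preceq_S u .
\]

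For notation, for $w\preceq_S z$ let $R^S_w:=\{z\in V\colon w\preceq_S z\}$ be the vertex set present in the peeling of $G[S]$ just before $w$ is removed, and for $v\in X$ write $d_S(v,X):=x(\{e=\{v,z\}\in\delta(v)\cap S\colon z\in X\})$. Because $\prec_S$ repeatedly deletes a vertex of minimum $x$-weighted $S$-degree, the map $w\mapsto d_S(v,R^S_w)$ is non-increasing as $w$ moves along $\prec_S$ towards $v$, and $d_S(v,R^S_v)\le 2$ by the degeneracy bound \eqref{eq:degBoundSOnly}. I would therefore \emph{define} $w_S(v)$ to be the $\prec_S$-earliest vertex $w\preceq_S v$ with $d_S(v,R^S_w)\le 3$; this is well defined, $w_S(v)\preceq_S v$, and the first property holds verbatim since its left-hand side equals $d_S(v,R^S_{w_S(v)})\le 3$. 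With this choice the displayed implication reduces to a ``flip'' lemma: \emph{if $e=\{v,u\}\in S$, $u\prec_S v$ and $v\prec_{S\setminus\{e\}}u$, then $d_S(v,R^S_u)\le 3$} (indeed, if $v\prec_S u$ then $w_S(v)\preceq_S v\prec_S u$; otherwise the lemma makes $u$ a candidate for $w_S(v)$). I would also note here that the ``naive'' coupling $\Pr_S[(e\notin S)\wedge(v\prec_S u)]\le\Pr_S[(e\in S)\wedge(v\prec_S u)]$ fails precisely because adding an incident edge can make $u$ flip past $v$, and the window $[w_S(v),v)$ is exactly what absorbs such flips.

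To prove the flip lemma, put $S_0:=S\setminus\{e\}$. Since $u\in R^S_u$ and $e=\{v,u\}$, we have $d_S(v,R^S_u)=d_{S_0}(v,R^S_u)+x_e$ with $x_e\le x(E[\{v,u\}])\le 1$ (forest polytope), so it suffices to show $d_{S_0}(v,R^S_u)\le 2$. This I would derive from a monotonicity of peeling under edge addition: if $S=S_0\cup\{e\}$ with $e=\{v,u\}\notin S_0$ and $v\prec_{S_0}u$, then $\{z\colon z\prec_{S_0}v\}\subseteq\{z\colon z\prec_S u\}$, i.e.\ $R^S_u\subseteq R^{S_0}_v$. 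Granting it, $R^S_u\setminus\{v\}\subseteq\{z\colon v\prec_{S_0}z\}$, hence $d_{S_0}(v,R^S_u)\le x(\{e'=\{v,z\}\in\delta(v)\cap S_0\colon v\prec_{S_0}z\})\le 2$ by \eqref{eq:degBoundSOnly} for the sample $S_0$, which finishes the lemma and thus the theorem. The monotonicity itself I would prove by induction on $|V|$: let $w_1$ be the first vertex removed by the peeling of $G[S_0]$; since adding $e$ only raises the $S$-degrees of $v$ and $u$ and lowers none, a one-line check of the tie-breaking rule shows $w_1$ is also removed first by the peeling of $G[S]$; if $w_1=v$ the claim is vacuous, and otherwise $w_1\notin\{v,u\}$, so deleting $w_1$ from both graphs preserves $S=S_0\cup\{e\}$ and $v\prec_{S_0}u$ while leaving the residual orders as restrictions, and we conclude by the induction hypothesis together with $w_1\prec_S u$.

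The main obstacle, to my mind, is conceptual rather than computational: pinning down the right definition of $w_S(v)$ (so that the first property comes for free) and isolating the clean monotonicity statement above. Once that statement is in hand its proof is short, but one has to be careful about consistent tie-breaking — exactly why the algorithm fixes a random vertex labeling up front — whenever one compares $\prec_S$ with $\prec_{S\setminus\{e\}}$.
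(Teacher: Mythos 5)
Your proposal is correct and follows essentially the same route as the paper's proof: the same measure-preserving injection $S \mapsto S \cup \{e\}$, the same monotonicity statement about how the peeling order can change when an incident edge is added (this is exactly \cref{lem:low-rk}), and the same two bounds, namely the degeneracy bound \eqref{eq:degBoundSOnly} and the forest-polytope bound $x_e \le 1$. The only difference is organizational: you define $w_S(v)$ as the $\prec_S$-earliest vertex whose tail load is at most $3$ (making the first bullet hold by definition), whereas the paper takes the vertex whose removal drops the sampled load incident to $v$ to at most $2$; this merges the paper's three-case analysis into your two cases but does not change the substance of the argument.
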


\noindent Assuming the above result, the desired bound on the load of the partitions is then immediate.

\begin{proof}[Proof of \cref{thm:boundedExpXVal}]
    We have
    \begin{align*}
        \E_S [x(E^S_v \setminus S)]
        & = \sum_{e = \{v, u\} \in \delta(v)} x_{e} \Pr[(e \notin S) \wedge (v \prec_{S} u)]
        \le \sum_{e = \{v, u\} \in \delta(v)} x_{e} \Pr[(e \in S) \wedge (w_{S}(v) \preceq_{S} u)] \\
        & = \E_S [x(\{e = \{v, u\} \in \delta(v) \cap S \colon w_{S}(v) \preceq_{S} u\})] \le 3. \qedhere
    \end{align*}
\end{proof}

Thus, all that remains is to prove~\cref{thm:coupling}. To do so, we need the following additional result, which describes how a given topological ordering $\prec_{S}$ can change when an edge $e \in E \setminus S$ is added to the sample $S$.

\begin{lemma}\label{lem:low-rk}
    Given a sample $S \subsetneq E$ and an edge $e = \{v, u\} \notin S$, let $\tilde{S} = S \cup \{e\}$ and consider the $x$-topological orderings $\prec_{S}$ and $\prec_{\tilde{S}}$. Then for all $t \in V$ such that $t \prec_{S} v$ and $t \prec_{S} u$ we have $t \prec_{\tilde{S}} v$ and $t \prec_{\tilde{S}} u$.
\end{lemma}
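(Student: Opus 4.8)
The plan is to trace how the greedy min-$x$-load process that defines $\prec_S$ reacts to inserting the single edge $e=\{v,u\}$. Recall that $\prec_S$ first orders the vertices not incident to any edge of $S$ (using the fixed tie-breaking labels) and then, on the endpoint set $T$ of $S$, repeatedly removes a vertex $w$ of smallest load $d_S(w,A)\coloneqq x(\{f\in S : f=\{w,w'\},\ w'\in A\})$ to the set $A$ of not-yet-removed vertices, breaking ties by label. The one observation driving everything is that, writing $d_{\tilde S}$ for the analogous load with respect to $\tilde S=S\cup\{e\}$, we have $d_{\tilde S}(w,A)=d_S(w,A)$ for every $w\notin\{v,u\}$, while $d_{\tilde S}(v,A)=d_S(v,A)+x_e$ whenever $u\in A$ (and symmetrically with the roles of $v$ and $u$ swapped). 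So adding $e$ only raises the loads of $v$ and $u$, and only while the respective other endpoint is still present.

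First I would dispose of the easy cases according to whether both $v$ and $u$ already lie in the endpoint set $T$ of $S$. If not, say $u\notin T$, then $u$ sits in the initial ``pre-block'' of vertices not incident to any edge of $S$ in the ordering $\prec_S$; hence any $t$ with $t\prec_S v$ and $t\prec_S u$ also lies in that pre-block, and since it differs from both $v$ and $u$ it lies in the pre-block of $\prec_{\tilde S}$ as well (which is contained in that of $\prec_S$, as $v$ or $u$ now becomes an endpoint), and therefore precedes both $v$ and $u$ in $\prec_{\tilde S}$. This settles all cases except $v,u\in T$, where $\tilde S$ has the same endpoint set $T$, the pre-blocks of $\prec_S$ and $\prec_{\tilde S}$ coincide, and the claim reduces to comparing the greedy orderings on $(T,S)$ and $(T,\tilde S)$. (If $x_e=0$ these two orderings are identical and there is nothing to prove, so I may assume $x_e>0$.)

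For the main case I would argue by induction on the step index that the two greedy processes make identical choices up to and including the last step before $v$ or $u$ is removed in the $S$-process. Let $k$ be the number of vertices preceding both $v$ and $u$ in $\prec_S$ and let $Q_k$ be that set; it is a prefix of $\prec_S$ avoiding $v$ and $u$, reached via prefixes $Q_0\subsetneq Q_1\subsetneq\cdots\subsetneq Q_k$. Suppose both processes have produced $Q_{i-1}$ for some $i\le k$. They then face the same remaining set, which still contains $v$ and $u$ since $Q_{i-1}\subseteq Q_k$, and the $S$-process removes some minimizer $w\notin\{v,u\}$; call its minimum load value $\mu$. Since the only changed loads are those of $v$ and $u$, and these changed upward by $x_e>0$ and so are strictly above $\mu$, the $\tilde S$-minimum is still $\mu$, attained exactly by the $S$-minimizers with $v,u$ deleted; as $w$ is the lowest-labelled $S$-minimizer and differs from $v$ and $u$, it is still the lowest-labelled $\tilde S$-minimizer, so the $\tilde S$-process also removes $w$. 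Hence $Q_k$ is a prefix of $\prec_{\tilde S}$ too, and, containing neither $v$ nor $u$, it forces every $t\in Q_k$ to satisfy $t\prec_{\tilde S}v$ and $t\prec_{\tilde S}u$; since $Q_k=\{t : t\prec_S v,\ t\prec_S u\}$, this is exactly the statement of the lemma.

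The step I expect to need the most care is the tie-breaking argument inside the induction: a priori, raising the loads of $v$ and $u$ could change which vertex wins a tie among the remaining vertices. This is handled by the observation that increasing a vertex's load can only drop it out of the set of minimizers, and deleting non-minimum-label elements from a set never changes its minimum-label element --- so the tie-break winner is unchanged as long as it is not $v$ or $u$, which holds throughout the first $k$ steps by the choice of $k$. The remaining work is purely bookkeeping around the convention that non-incident vertices are ordered first, which is what necessitates the case split above.
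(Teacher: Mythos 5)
Your proof is correct and follows essentially the same route as the paper's: the observation that adding $e$ only raises the loads of $v$ and $u$, together with the fixed tiebreaking rule, forces the two greedy processes to make identical choices until $v$ or $u$ is removed. The paper states this in two sentences, while you spell out the induction, the tie-break bookkeeping, and the edge cases ($v$ or $u$ not yet an endpoint of a sampled edge, $x_e=0$) explicitly.
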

\begin{proof}
    Note that adding $e$ to the subgraph $G[S]$ induced by $S$ can only increase $x(\delta_{G[S]}(v))$ and $x(\delta_{G[S]}(u))$. Since additionally the tiebreaking rule is fixed, the procedures for determining the $x$-topological ordering starting with $G[S]$ and $G[\tilde{S}]$ will proceed exactly the same as long as there remains a vertex $t$ such that $t \prec_{S} v$ and $t \prec_{S} u$.
\end{proof}

We are now ready to prove \cref{thm:coupling}, which completes the proof of our main result, \cref{thm:main}.

\begin{proof}[Proof of~\cref{thm:coupling}.]
    Given a sample set $S \subseteq E$ and a vertex $v \in V$, we define $w_S(v) \in V$ as follows:
    \begin{itemize}
        \item if $x(\delta(v) \cap S) \le 2$, let $w_{S}(v)$ be the vertex such that $w_{S}(v) \preceq_{S} u$ for all $u \in V$;
        \item if $x(\delta(v) \cap S) > 2$, let $w_{S}(v)$ be the vertex such that after the procedure for determining the $x$-topological ordering deletes it, the remaining total $x$-value of the edges incident to $v$ becomes at most $2$.
    \end{itemize}
    Note that if $x(\delta(v) \cap S) \le 2$, then the first statement of the theorem holds by the definition of $w_{S}(v)$. On the other hand, if $x(\delta(v) \cap S) > 2$, then the definition of $w_{S}(v)$ implies that until the procedure for determining the $x$-topological ordering deletes it from the graph, the vertex $v$ cannot be removed. In particular, $w_{S}(v) \prec_{S} v$. Moreover, by the definition of $w_{S}(v)$ we have
    \begin{align}
        x(\{e = \{v, u\} \in \delta(v) \cap S \colon w_{S}(v) \prec_{S} u\}) & \le 2 \nonumber \\
        2 < x(\{e = \{v, u\} \in \delta(v) \cap S \colon w_{S}(v) \preceq_{S} u\}) & \le 3, \label{eq:wproperty}
    \end{align}
    where the last inequality follows from the fact that the total $x$-load of all edges with endpoints $v$ and $w_S(v)$ is at most one, because $x$ is in the forest polytope. (Note the difference in the two expressions displayed above in terms of '$\prec_S$' vs.~'$\preceq_S$'.) This concludes the proof of the first statement of the theorem.

    Next we prove the second statement. Observe that since every realization of $S$ is equiprobable and $S$ contains each edge with probability $\sfrac{1}{2}$ independently, to prove the desired inequality it suffices to show that there exists an injective map from the set of realizations of $S$ satisfying $e \notin S$ and $v \prec_{S} u$ to the set of realizations satisfying $e \in S$ and $w(v) \preceq_{S} u$. To this end, consider any realization of $S$ such that $e \notin S$ and $v \prec_{S} u$ and the injective map $S \mapsto \tilde{S} \coloneqq S \cup \{e\}$. We show that $w_{\tilde{S}}(v) \preceq_{\tilde{S}} u$ by considering the following three cases.
    \begin{itemize}
        \item Suppose $x(\delta(v) \cap \tilde{S}) \le 2$. Then $w_{\tilde{S}}(v) \preceq_{\tilde{S}} u$ holds by the definition of $w_{\tilde{S}}(v)$.

        \item Suppose $x(\delta(v) \cap \tilde{S}) > 2$ and $x(\delta(v) \cap S) \le 2$. Additionally, for the sake of deriving a contradiction suppose that $u \prec_{\tilde{S}} w_{\tilde{S}}(v)$. Then
        \begin{align*}
            x \Big( \big\{ e' = \{v, t\} \in \delta(v) \cap \tilde{S} \colon w_{\tilde{S}}(v) \preceq_{\tilde{S}} t \big\} \Big)
                \le x \big(\delta(v) \cap (\tilde{S} \setminus \{e\}) \big)
                = x \big(\delta(v) \cap S \big) \le 2,
        \end{align*}
        which contradicts the definition of $w_{\tilde{S}}(v)$, namely the strict inequality in \eqref{eq:wproperty}. Thus $w_{\tilde{S}}(v) \preceq_{\tilde{S}} u$.

        \item Suppose $x(\delta(v) \cap S) > 2$. Additionally, for the sake of deriving a contradiction suppose that $u \prec_{\tilde{S}} w_{\tilde{S}}(v)$. Our goal is to show that then $x(\{e'=\{v,t\}\in \delta(v)\cap \tilde{S} \colon w_{\tilde{S}}(v) \preceq_{\tilde{S}} t\}) \leq 2$, which violates the strict inequality in~\eqref{eq:wproperty}.
        First, note that, since $u \prec_{\tilde{S}} w_{\tilde{S}}(v)$ by our assumption, we have
        \begin{equation}\label{eq:xLoadAfterWST}
            x \Big( \big\{ e' = \{v, t\} \in \delta(v) \cap \tilde{S} \colon w_{\tilde{S}}(v) \preceq_{\tilde{S}} t \big\} \Big)
              =  x \Big( \big\{ e' = \{v, t\} \in \delta(v) \cap S \colon w_{\tilde{S}}(v) \preceq_{\tilde{S}} t \big\} \Big).
        \end{equation}
        Second, observe that for any $t\in V$ we have
        \begin{equation}\label{eq:afterWSTIsAfterV}
            w_{\tilde{S}}(v) \preceq_{\tilde{S}} t \implies v \preceq_{S} t.
        \end{equation}
        Indeed, the contrapositive holds because if $t \prec_{S} v$, then, by \cref{lem:low-rk}, we have $t \prec_{\tilde{S}} u$; combining this with our assumption $u \prec_{\tilde{S}} w_{\tilde{S}}(v)$ implies \cref{eq:afterWSTIsAfterV}. Finally, we obtain the desired contradiction by combining the above observations:
        \begin{align*}
            x \Big( \big\{ e' = \{v, t\} \in \delta(v) \cap \tilde{S} \colon w_{\tilde{S}}(v) \preceq_{\tilde{S}} t \big\} \Big)
                & \leq x \Big( \big\{ e' = \{v, t\} \in \delta(v) \cap S \colon v \preceq_{S} t \big\} \Big)\\
                &   =  x \Big( \big\{ e' = \{v, t\} \in \delta(v) \cap S \colon v \prec_{S} t \big\} \Big)\\
                & \leq 2.
        \end{align*}
        Here the first inequality holds by~\eqref{eq:xLoadAfterWST} and~\eqref{eq:afterWSTIsAfterV}, the equality holds as $v \neq t$, and the last inequality holds by~\eqref{eq:degBoundSOnly}. \qedhere
    \end{itemize}
\end{proof}

Finally, we note that the argument used in our proof of \cref{thm:main} is robust with respect to the arrival order of the edges in $E\setminus S$. Indeed, if the events $A$, $B$, and $C$ hold, then $e$ is selected in any arrival order of the elements in $E\setminus S$. Hence, \cref{thm:main} holds even in the setting where the algorithm first observes a sample $S \subseteq E$ containing each edge independently with probability $\sfrac{1}{2}$ and the remaining edges arrive in adversarial order. Hence, our algorithm also works in the sample-based OCRS setting with at least the same selectability.

\section{Conclusion}

In this paper we presented a constant-selectable random order contention resolution scheme for graphic matroids, in the setting where the matroid and the distribution are not given upfront, but rather revealed as elements arrive. In fact, our algorithm works in the more restrictive setting where a constant fraction of the elements of the matroid is given upfront as a random sample and the remaining elements arrive in adversarial order. In contrast to previous work, we are able to considerably reduce the amount of prior information required by the algorithm while still achieving constant selectability.

We believe that the distribution and/or matroid unknown ROCRS setting introduced in this paper opens many avenues for future work. For instance, an immediate natural question is whether one can also obtain distribution and matroid unknown ROCRS for other classes of matroids admitting an explicit representation via (hyper) graphs, such as transversal matroids, hypergraphic matroids, and gammoids.  On the more general side, it remains an interesting question whether general matroids admit distribution and/or matroid unknown constant selectable ROCRS. Ultimately, developing online contention resolution schemes with fewer and fewer assumptions about the input might eventually lead to an algorithmic reduction from fractional MSP to MSP.

\printbibliography

\appendix
\section{Appendix}\label{sec:appendix}

In this section we discuss why the inequality $\Pr_{S}[(e \notin S) \wedge (v \prec_{S} u)] \le \Pr_{S}[(e \in S) \wedge (v \prec_{S} u)]$ where $e = \{v, u\} \in E$, which was discussed at the beginning of~\cref{sec:analysis}, does not hold in general.

We begin by considering the basic instance shown in~\cref{fig:counterexample-basic}, where the $x$-value associated to each edge is indicated next to it on the figure. Observe that if we compute the $x$-topological ordering for sample sets $S_{1} = E \setminus \{e\}$ and $S_{2} = E$, we get $v \prec_{S_{1}} u$ and $u \prec_{S_{2}} v$, respectively, regardless of which tiebreaking rule is used. This observation shows that even if a sample set $S \subseteq E$ satisfies $e \notin S$ and $v \prec_{S} u$, then $v \prec_{S \cup \{e\}} u$ does not necessarily hold.

\begin{figure}[ht]
    \centering
    \begin{tikzpicture}[xscale=2]
        \begin{scope}[every node/.style={circle,draw,inner sep=3pt}]
            \node (a) at (0, 0) {$a$};
            \node (b) at (1, 0) {$b$};
            \node (v) at (2, 0) {$v$};
            \node (u) at (3, 0) {$u$};
            \node (c) at (4, 0) {$c$};
        \end{scope}
        \node (space) at (0, -1) { };

        \draw[-] (a) -- node[above] {$1.0$} (b);
        \draw[-] (b) -- node[above] {$0.1$} (v);
        \draw[-] (v) -- node[above] {$0.5$} (u);
        \draw[-] (u) -- node[above] {$0.4$} (c);
    \end{tikzpicture}
    \caption{Basic instance for constructing examples where $\Pr_{S}[(e \notin S) \wedge (v \prec_{S} u)] > \Pr_{S}[(e \in S) \wedge (v \prec_{S} u)]$.}
    \label{fig:counterexample-basic}
\end{figure}
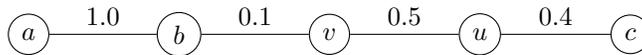

However, we are not quite done yet: one can manually verify that the basic instance shown in~\cref{fig:counterexample-basic} yields $\Pr_{S}[(\{v, u\} \notin S) \wedge (v \prec_{S} u)] > \Pr_{S}[(\{v, u\} \in S) \wedge (v \prec_{S} u)]$ only for one of two ways to break ties between $v$ and $u$, while the other tiebreaking rule instead results in an equality. In contrast, we are interested in counterexamples that do not rely on a particular tiebreaking rule to work. Nevertheless, such counterexamples can be obtained via a modification of the basic instance as we will now demonstrate. Intuitively, if we modify the basic instance so that with high probability the sampled set looks ``similar" to either $S_{1}$ or $S_{2}$ described above, then, roughly speaking, we will almost always end up in the case where $v \prec_{S \setminus \{e\}} u$ and $u \prec_{S \cup \{e\}} v$ and as a result the original inequality will be broken for any fixed tiebreaking rule. To this end, we introduce the following modification: replace each edge $e' \in E \setminus \{e\}$ with $k \in \mathbb{Z}_{\geq 1}$ copies of it with associated $x$-values equal to $\frac{x(e')}{k}$. We denote the resulting instance $G_{k}$ and use $E_{k}(a, b)$, $E_{k}(b, v)$, and $E_{k}(u, c)$ to denote the sets of edges in $G_{k}$ between $a$ and $b$, $b$ and $v$, and $u$ and $c$, respectively. It is not difficult to see that for sufficiently large $k$ the Chernoff concentration bound and the union bound ensure that with high probability we simultaneously have $x(E_{k}(a, b) \cap S) \in [0.41, 1]$, $x(E_{k}(b, v) \cap S) \in [0.01, 0.1]$, and $x(E_{k}(u, c) \cap S) \in [0.11, 0.4]$. Conditioned on this event occurring, we get $v \prec_{S \setminus \{e\}} u$ and $u \prec_{S \cup \{e\}} v$ as desired.
Consequently, for sufficiently large $k$, the multigraph $G_{k}$ is indeed a counterexample to the original inequality irrespective of the chosen tiebreaking rule.

While the above argument already disproves the original inequality, we additionally provide a smaller counterexample instance in~\cref{fig:counterexample-specific}, which avoids the step where we replace edges by parallel copies and use concentration bounds.
This example can be verified manually by computing the $x$-topological ordering for all $32$ possible realizations of $S$. 
More specifically, for realizations of $S$ with $\{v, u\} \notin S$, it turns out that $v \prec_{S} u$ in $13$ cases, while in $2$ further cases the ordering depends on the tiebreaking rule. As for realizations of $S$ with $\{v, u\} \in S$, we get $u \prec_{S} v$ in $5$ cases and in the remaining $11$ cases the ordering depends on the tiebreaking rule. Therefore, regardless of the chosen tiebreaking rule, we get $\Pr_{S}[(\{v, u\} \notin S) \wedge (v \prec_{S} u)] > \Pr_{S}[(\{v, u\} \in S) \wedge (v \prec_{S} u)]$ for this instance.

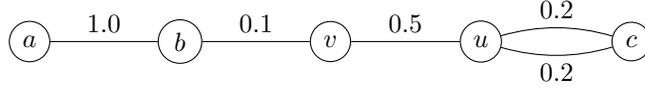
\begin{figure}[ht]
    \centering
    \begin{tikzpicture}[xscale=2]
        \begin{scope}[every node/.style={circle,draw,inner sep=3pt}]
            \node (a) at (0, 0) {$a$};
            \node (b) at (1, 0) {$b$};
            \node (v) at (2, 0) {$v$};
            \node (u) at (3, 0) {$u$};
            \node (c) at (4, 0) {$c$};
        \end{scope}
        \node (space) at (0, -1) { };

        \draw[-] (a) -- node[above] {$1.0$} (b);
        \draw[-] (b) -- node[above] {$0.1$} (v);
        \draw[-] (v) -- node[above] {$0.5$} (u);
        \draw[-] (u) edge[bend left] node[above] {$0.2$} (c);
        \draw[-] (u) edge[bend right] node[below] {$0.2$} (c);
    \end{tikzpicture}
    \caption{Example instance for which $\Pr_{S}[(\{v, u\} \notin S) \wedge (v \prec_{S} u)] > \Pr_{S}[(\{v, u\} \in S) \wedge (v \prec_{S} u)]$ holds.}
    \label{fig:counterexample-specific}
\end{figure}

\end{document}